\documentclass{patmorin}
\usepackage{amsthm,amsmath,graphicx,stmaryrd,amsopn,url}
\usepackage{pat}
\listfiles

\newcommand{\n}{N}

\title{\MakeUppercase{Crossings in Grid Drawings}}

\author{%
  Vida Dujmovi\'c,\thanks{Department of Mathematics and Statistics
       and Department of Systems and Computer Engineering, 
       Carleton University}\,\, 
  Pat Morin,\thanks{School of Computer Science, Carleton University}\,\, and 
  Adam Sheffer\thanks{School of Computer Science, Tel Aviv University}}

\DeclareMathOperator{\crs}{cr}

\DeclareMathOperator{\skp}{skip}
\DeclareMathOperator{\ncs}{ncs}
\DeclareMathOperator{\volume}{vol}

\begin{document}
\begin{titlepage}
\maketitle

\begin{abstract}
  We prove tight crossing number inequalities for geometric graphs whose
  vertex sets are taken from a $d$-dimensional grid of volume $\n$
  and give applications of these inequalities to counting the number
  of crossing-free geometric graphs that can be drawn on such grids.

  In particular, we show that any geometric graph with $m\geq 8N$
  edges and with vertices on a 3D integer grid of volume $N$, has
  $\Omega((m^2/n)\log(m/n))$ crossings. In $d$-dimensions, with $d\ge 4$,
  this bound becomes $\Omega(m^2/n)$. We provide matching upper bounds
  for all $d$. Finally, for $d\ge 4$ the upper bound implies that the
  maximum number of crossing-free geometric graphs with vertices on some
  $d$-dimensional grid of volume $N$ is $n^{\Theta(n)}$. In 3 dimensions
  it remains open to improve the trivial bounds, namely, the $2^{\Omega(n)}$
  lower bound and the $n^{O(n)}$ upper bound.
\end{abstract}

\end{titlepage}


\section{Introduction}

The study of crossings in drawings of graphs has a long history.
\emph{Euler's Formula} states that the maximum number of edges in an $n$
vertex \emph{planar graph}---one that can be drawn in the plane without
crossings---is $3n-6$.  Using Euler's Formula and careful counting, Ajtai
\etal~\cite{ajtai.chvatal.ea:crossing-free} showed that any plane drawing
of a graph with $n$ vertices and $m\ge 4n$ edges has at least $c m^3/n^2$
crossing pairs of edges, for some constant $c\ge 1/100$.  The same authors
used this to prove their main result: The maximum number of planar graphs
that can be embedded on any fixed set of $n$ points is $2^{O(n)}$.

The lower bound, $cm^3/n^2$, on the number of crossings in a plane
drawing has since become known as ``the \emph{Crossing Lemma}''
or ``the Crossing Number Inequality'' and has subsequently found many
other applications.  Sz\'ekely~\cite{szekely:crossing} showed that
this inequality can be used to give very simple proofs of many results
in incidence geometry, including a proof of the Szemer\'edi-Trotter
Theorem on point-line incidences~\cite{szemeredi.trotter:extremal}.
Sz\'ekely's method has since been used for many combinatorial geometry
problems; the most famous of these applications is probably the result
of Dey~\cite{dey:improved} on the maximum number $k$-sets of a point set.

Ajtai \etal's proof of the Crossing Lemma uses the probabilistic method
in the sense of Chv\'atal~\cite{chvatal:hypergraphs}: The proof works by
summing the number of crossings in two different ways.  More recently, a
``from the book'' proof of the Crossing Lemma that uses a more literal
application of the probabilistic method to obtain a better constant,
$c\ge 1/64$, was discovered by Chazelle, Sharir, and Welzl (See Aigner
and Ziegler~\cite[Chapter~30, Theorem~4]{aigner.ziegler:proofs}).  Pushing
this argument even further, Pach \etal~\cite{pach.radoicic.ea:improving}
currently hold the record for the largest constant, $c\ge 1/33.75$.

The main result of Ajtai \etal---that the maximum number
of crossing-free graphs that can be drawn on any point set of
size $n$ is $2^{O(n)}$---has also been the starting point
for many research problems. The original bound, which was
$O(10^{13n})$, has been improved repeatedly to the current record of
$O(187.53^n)$~\cite{sharir.sheffer:counting*1}.  The result has also
been tightened for special classes of crossing-free graphs including
triangulations ($O(30^n)$)~\cite{sharir.sheffer:counting}, spanning
cycles ($O(54.55^n)$) \cite{sharir.sheffer.ea:counting}, perfect
matchings ($O(10.05^n)$)~\cite{sharir.welzl:on}, spanning trees
($O(141.07^n)$)~\cite{hoffmann.sharir.ea:counting}, and cycle free graphs
($O(160.55^n)$)~\cite{hoffmann.sharir.ea:counting,sharir.sheffer:counting}.
A webpage containing an up-to-date compendium of these types of results
is maintained by the third author \cite{sheffer:numbers}.

\subsection{Geometric Grid Graphs}

Thus motivated by the importance of Ajtai \etal's results, the goal
of the present paper is to extend their results to graph drawings in
higher dimensions.  In particular, we extend their results to graphs
drawn on grids.  For any positive integers $X_1,\ldots,X_d$, the
$d$-dimensional \emph{$X_1\times\cdots\times X_d$ grid} is a finite
subset of the $d$-dimensional natural lattice, $\N^d$, given by
\[  \N(X_1,\ldots,X_d) = \{(x_1,\ldots,x_d): 
      \mbox{$x_i\in\{1,\ldots,X_i\}$ for all $i\in\{1,\ldots,d\}$}\}
	\enspace .\]
The \emph{volume} of the $X_1\times\cdots\times X_d$ grid is
$\prod_{i=1}^d X_i$, i.e,. the number of points in the grid.

A ($d$-D) \emph{geometric (grid) graph}, $G$, is a graph with vertex
set $V(G)\subseteq \N^d$.  Throughout this paper, for two vertices $u$
and $w$ in a geometric graph, $G$, we will use the notation $uw$ to
refer both to the open line segment with endpoints $u$ and $w$ and to
the edge $uw\in E(G)$, if present.    The \emph{volume}, $\volume(G)$,
of $G$ is the volume of the minimal $X_1\times\cdots\times X_d$ grid
that contains $V(G)$.

A geometric grid graph, $G$, is \emph{proper} if, for every edge $uw\in
E(G)$, and every vertex $x\in V(G)$, we have that $x\not\in uw$.  That is,
$G$ is proper if no edge passes through a vertex.  For the remainder of
this paper, all geometric grid graphs we refer to are proper.  From this
point onwards, the phrase ``geometric grid graph'' should be interpreted
as ``proper geometric grid graph.''

Two edges $uw$ and $xy$ in a geometric grid graph \emph{cross} if they have a
point in common.  When this happens, we say that $uw$ and $xy$ form
a \emph{crossing}.  We define $\crs(G)$ as the number of crossings
in $G$.\footnote{Note that this is different from the planar crossing
number, usually also denoted $\mathrm{cr}(G)$, that is the minimum number of
crossings in any drawing of the (non-geometric) graph $G$.}  We say that
$G$ is \emph{crossing-free} if $\crs(G)=0$.  Finally, we define
\[ \crs_d(\n,m)=\min\left\{\crs(G):\mbox{%
    $G$ is a $d$-D geometric grid graph, $|E(G)|=m$, and $\volume(G)\le\n$}
   \right\} \enspace .
\]
That is, $\crs_d(\n,m)$ is the minimum number of crossings in any
$d$-D geometric grid graph with $m$ edges and volume no more
than $\n$.

We are also interested in the maximum number, $\ncs_d(\n)$, of crossing-free
$d$-D geometric grid graphs that can be drawn on any particular grid of
volume at most $\n$.  That is,
\[
  \ncs_d(\n) = \max\left\{
     \left|\left\{G:\mbox{$V(G)\subseteq\N(X_1,\ldots,X_d)$
            and $\crs(G)=0$} \right\}\right| :
    \mbox{$\prod_{i=1}^d X_i \le\n$} \right\} \enspace .
\]

Results on plane drawings of graphs have immediate implications
for $\crs_2(\n,m)$ and $\ncs_2(\n)$:
\begin{enumerate}
  \item  Euler's Formula implies that $\crs_2(\n,3\n-5)\ge 1$,
  \item  Ajtai \etal's Crossing Lemma implies that $\crs_2(\n,m)\ge
  cm^3/\n^2$ for $m\ge 4\n$, and
  \item  Ajtai \etal's upper-bound of $2^{O(n)}$ on the number of planar
  graphs that can be drawn on any planar point set of size $n$ implies that
  $\ncs_2(\n)\in 2^{O(\n)}$.
\end{enumerate}

Bose \etal~\cite{bose.czyzowicz.ea:maximum} show that the maximum number
of edges in a crossing-free $d$-D geometric grid graph of volume $\n$ is at
most $(2^d-1)\n-\Theta(\n^{(d-1)/d})$. This result is analogous to Euler's
Formula in the sense that it shows that such graphs have a linear number
of edges.  It also implies, for example, that $\crs_d(\n,(2^d-1)\n)\ge 1$.
Since Euler's Formula is the main property of planar graphs used by Ajtai
\etal\ to prove their results, it seems reasonable that bounds similar
to those of Ajtai \etal\ should hold for $d$-D geometric grid graphs.

The key difference, however, is that unlike Euler's formula, the bound
of Bose \etal\ depends on the volume, $\n$, of the grid and not on the
number, $n$, of vertices in the graph.  For $d\ge 3$ it is not possible
to obtain non-trivial bounds on crossings that depend only on the
number of edges and vertices.  For example, there exists a crossing-free
3-D geometric grid graph of volume $N$ that has $n=N^{1/3}$ vertices
and $m=\binom{n}{2}$ edges \cite{cohen.eades.ea:three-dimensional}.

\subsection{New Results}

In the current paper, we study $\crs_d(\n,m)$ and $\ncs_d(\n)$ for $d\ge
3$ and prove the results shown in \tabref{results}.  In \tabref{results},
and throughout this paper, we assume that $d$ is a constant that is
independent of $\n$ and $m$, so that the $O$, $o$, $\Omega$, $\omega$,
and $\Theta$ notations hide factors that depend only on $d$.

\begin{table}
  \begin{center}
    \begin{tabular}{r|lllc}
      $d$ & $\crs_d(\n,m)$ & $\ncs_d(\n)$ & References \\ \hline
      2 & $\Theta(m^{3}/\n^2)$ & $2^{\Theta(\n)}$ &~\cite{ajtai.chvatal.ea:crossing-free} \\
      3 & $\Theta((m^2/\n)\log(m/\n))$ &  & Theorems~\ref{thm:3d-lower-bound} and \ref{thm:3d-upper-bound} \\
      $\ge 4$ & $\Theta(m^{2}/\n)$ & $2^{\Theta(\n\log\n)}$ & Theorems~\ref{thm:4d-lower-bound}, \ref{thm:4d-upper-bound}, and \ref{thm:4d-counting} \\
    \end{tabular}
  \end{center}
  \caption{Old and new results on crossings in $d$-D geometric grid graphs.}
  \tablabel{results}
\end{table}

Our results show that the situation in three and higher dimensions
is significantly different than in two dimensions. For all $d \ge
4$, and $m\ge 2^d\n$, $\crs_d(\n,m)\in \Theta(m^2/\n)$ and even
$\crs_3(\n,m)$ is only $\Theta((m^2/\n)\log(m/\n))$.
There are therefore geometric grid graphs with $\Omega(\n^2)$ edges
that have only $O(\n^3)$ crossings ($O(\n^3\log\n)$ crossings in
3-d).  In contrast, in 2 dimensions, any graph with $n$ vertices and
$\Omega(n^2)$ edges has $\Omega(n^4)$ crossings.

For $d\ge 4$, the bounds on $\crs_d(\n,m)$ are strong enough to
show that $\ncs_d(\n)\in 2^{\Theta(\n\log\n)}$.  Thus, the number,
$2^{\Theta(\n\log\n)}$, of crossing-free graphs whose vertex set comes
from a specific $d$-dimensional grid having $\n$ points is much larger
than the number, $2^{\Theta(\n)}$,  of crossing-free graphs that can be
drawn on any planar point set of size $\n$.

\subsection{Related Work}

The study of crossing-free 3-D geometric grid graphs is an
active area in the field of graph drawing.  A \emph{$d$-D
grid drawing} of a graph, $G$, is a
one-to-one mapping $\varphi:V(G)\rightarrow \N^d$.  Any drawing,
$\varphi$, yields a geometric grid graph, $\varphi(G)$, with
vertex set $V(\varphi(G))=\{\varphi(u):u\in V\}$ and edge set
$E(\varphi(G))=\{\varphi(u)\varphi(w):uw\in E(G)\}$.  The drawing
$\varphi$ is \emph{crossing-free} if the geometric grid graph $\varphi(G)$
is crossing-free and the \emph{volume} of $\varphi$ is the volume of
$\varphi(G)$.

Cohen \etal~\cite{cohen.eades.ea:three-dimensional} showed that the
complete graph, $K_n$, on $n$ vertices, and therefore any graph on $n$
vertices, has a crossing-free 3-D grid drawing of volume $O(n^3)$ and this
is optimal.  However, for many classes of graphs, sub-cubic volume 3-D
grid drawings are possible; this includes 
sufficiently sparse graphs ($O(m^{4/3}n)$)
  \cite{dujmovic.wood:three-dimensional}, 
  graphs with maximum degree $\Delta$ and other $\Delta$-degenerate
  graphs 
  ($O(\Delta mn)$, $O(\Delta^{15/2}m^{1/2}n$)) 
  \cite{dujmovic.wood:three-dimensional,dujmovic.wood:upward},
$\chi$-colorable graphs 
  ($O(\chi^2n^2)$, $O(\chi^6m^{2/3}n)$)
  \cite{pach.thiele.ea:three-dimensional,dujmovic.wood:three-dimensional}, 
graphs taken from some proper minor-closed family of graphs 
  ($O(n^{3/2})$)
  \cite{dujmovic.wood:three-dimensional}, 
planar graphs ($O(n\log^{16} n)$)
  \cite{battista.frati.ea:on},
outerplanar graphs ($O(n)$)
  \cite{felsner.liotta.ea:straight-line},
and graphs of constant treewidth ($O(n)$) \cite{dujmovic.morin.ea:layout}.

The work most closely related to the current work, in that it presents
an extremal result relating crossings, volume, and number of edges, is
that of Bose \etal~\cite{bose.czyzowicz.ea:maximum}, who show that the
maximum number of edges in a crossing-free $d$-D geometric grid graph,
$G$, with vertex set $V(G)\subseteq \N(X_1,\ldots,X_d)$, is exactly
\begin{equation}
    \prod_{i=1}^d (2X_i-1) - \prod_{i=1}^d X_i \enspace . \eqlabel{czyzo}
\end{equation}
For a fixed volume, $\n=\prod_{i=1}^d X_i$, maximizing \eqref{czyzo} gives
$X_1=\cdots=X_d=\n^{1/d}$, in which case \eqref{czyzo} becomes $(2^d-1)\n -
\Theta(\n^{(d-1)/d})\le (2^d-1)\n$.  We state this here as lemma since we
make use of it several times.
\begin{lem}[Bose \etal\ 2004]\lemlabel{boser}
  In any crossing-free $d$-D geometric grid graph, $G$, of volume $\n$
  $|E(G)|\le(2^d-1)\n$.
\end{lem}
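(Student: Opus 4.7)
The plan is to derive this lemma in two short steps, using the exact formula \eqref{czyzo} of Bose \etal\ essentially as a black box. The first step is, for any crossing-free graph $G$ of volume at most $\n$, to pick the smallest axis-aligned bounding box $\N(X_1,\ldots,X_d)$ of $V(G)$; then by definition $\volume(G)=\prod_{i=1}^d X_i\le\n$, and \eqref{czyzo} directly yields
\[
  |E(G)|\le \prod_{i=1}^d(2X_i-1)-\prod_{i=1}^d X_i\enspace .
\]

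The second step is a one-line arithmetic relaxation. I would estimate each factor by $2X_i-1\le 2X_i$, which gives $\prod_{i=1}^d(2X_i-1)\le 2^d\prod_{i=1}^d X_i=2^d\volume(G)$; subtracting $\prod X_i=\volume(G)$ on the right then gives $|E(G)|\le(2^d-1)\volume(G)\le(2^d-1)\n$, exactly the inequality claimed.

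There is essentially no obstacle beyond invoking \eqref{czyzo} correctly. The only point worth a brief sanity check is that \eqref{czyzo} bounds $|E(G)|$ even when $V(G)$ is a proper subset of the lattice points of $\N(X_1,\ldots,X_d)$; this is automatic, since a crossing-free grid graph on a smaller vertex set is a crossing-free grid graph on the full lattice as well (by adding isolated points) and thus inherits the same upper bound on edge count. It is worth noting that the factor-by-factor relaxation is lossy in the lower-order term: the tighter quantity $\prod(2X_i-1)-\prod X_i$ is maximized, subject to $\prod X_i=\n$, at $X_1=\cdots=X_d=\n^{1/d}$, which gives the sharper $(2^d-1)\n-\Theta(\n^{(d-1)/d})$ alluded to in the paragraph preceding the lemma; but since the lemma only asserts the clean linear bound $(2^d-1)\n$, the crude estimate suffices.
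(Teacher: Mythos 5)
Your proposal is correct and follows essentially the same route as the paper: both derive the lemma from the exact Bose \etal\ formula \eqref{czyzo} by elementary relaxation, the paper by noting the maximum over fixed volume is $(2^d-1)\n-\Theta(\n^{(d-1)/d})\le(2^d-1)\n$, and you by the factor-by-factor bound $2X_i-1\le 2X_i$, which is if anything slightly more direct since it avoids any claim about where the maximum is attained.
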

\lemref{boser} immediately yields the upper-bound
$\ncs_d(\n)\in2^{O(\n\log\n)}$ (see the beginning of \secref{counting}).
It also yields the lower-bound $\crs_d(m)\ge m-(2^{d}-1)N$ since, if a
geometric grid graph $G$ has $m\ge 2^{d-1}N$ we can remove an edge an
edge from $G$ that eliminates at least one crossing.  Since this can
be repeated until $G$ has $m\le 2^{d-1}N$ edges, this implies that $G$
has at least $m-(2^d-1)N$ crossings.

Finally, we note that Bukh and Hubard \cite{bukh.hubard:space} have
defined a form of crossing number for 3-dimensional geometric graphs
that are not necessarily grid graphs.  In their definition, a 4-tuple
of vertex-disjoint edges form a \emph{space crossing} if there is
a line that intersects every edge in the 4-tuple.  The \emph{space
crossing number}, $\crs_4(G)$, of a 3-d geometric graph, $G$, is the
number of space crossings formed by $G$'s edges.  They show that a
3-d geometric graph $G$ with $n$ vertices and $m\ge 4^{41}n$ edges
has a space crossing number $\crs_4(G) \in \Omega(m^6/(n^4\log^2 n))$.
An easy lifting argument shows that this bound on the space crossing
number almost implies the Crossing Lemma; specifically, it shows that
the number of crossings in a graph with $m$ vertices and $n$ edges drawn
in the plane is $\Omega(m^3/(n^2\log n))$.

\section{3-Dimensional Geometric Grid Graphs}

In this section, we present upper and lower bounds on $\crs_3(\n,m)$.
Here, and in the remainder of the paper we use the notation
$u_i$, $i\in\{1,\ldots,d\}$, to denote the $i$th coordinate of the
$d$-dimensional point $u$.  Thus for a point $u\in\R^3$, $u_1$, $u_2$,
and $u_3$ are $u$'s x-, y-, and z-coordinates, respectively.

\subsection{The Lower Bound}
\seclabel{3d-lower-bound}

\begin{thm}\thmlabel{3d-lower-bound}
  For all $m\ge 8\n$, $\crs_3(\n,m) \in \Omega((m^2/\n)\log(m/\n))$.
\end{thm}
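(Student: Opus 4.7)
The plan is to combine \lemref{boser} with two rounds of random sampling and a dyadic sub-grid decomposition. The first step is the linear bound: by \lemref{boser}, any crossing-free 3-D grid graph of volume $N$ has at most $7N$ edges, so iteratively deleting an edge that lies in a crossing yields $\crs(G)\ge m-7N$ whenever $m\ge 7N$. The second step is the standard Chazelle--Sharir--Welzl random vertex-sampling argument: retain each vertex of $G$ independently with probability $p\in(0,1]$, so that the induced subgraph has, in expectation, $p^2m$ edges and $p^4\crs(G)$ crossings while still lying in a grid of volume $N$. Applying the linear bound in expectation gives $p^4\crs(G)\ge p^2m-7N$, and optimizing at $p=\sqrt{14N/m}$ (valid for $m\ge 14N$) yields $\crs(G)\ge c_1m^2/N$ for a constant $c_1>0$. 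In the remaining range $8N\le m<14N$ the linear bound already gives $\crs(G)\ge m-7N=\Omega(N)=\Omega((m^2/N)\log(m/N))$, so a ``quadratic base case'' holds throughout $m\ge 8N$.

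The extra $\log(m/N)$ factor, which separates the 3-D bound from the 4-D bound of \thmref{4d-lower-bound}, I would extract by a dyadic sub-cube argument. For each scale $j=0,1,\ldots,\Theta(\log(m/N))$, apply a uniform random integer shift to an axis-aligned partition of the grid into sub-cubes of side $s_j=2^j$ and volume $V_j=s_j^3$, and apply the quadratic base case inside each sub-cube $B$: $\crs(G_B)\ge c_1m_B^2/V_j$, where $m_B$ counts the edges of $G$ with both endpoints in $B$. Cauchy--Schwarz across the sub-cubes at scale $j$ collapses the per-cube bounds to a scale-$j$ contribution of $c_1(\sum_B m_B)^2/N$, and summing over the $\Theta(\log(m/N))$ scales---with each crossing charged only to the smallest scale whose sub-cubes capture it---should give a sum of $\Theta(\log(m/N))$ contributions of size $\Omega(m^2/N)$, for a total of $\Omega((m^2/N)\log(m/N))$.

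The principal obstacle is the scale-by-scale bookkeeping. A crossing whose two edges jointly span a bounding box of side $\ell$ fits inside sub-cubes at every scale $s_j\ge\ell$, so a naive sum over scales overcounts each crossing by a $\log$ factor and exactly cancels the hoped-for amplification. Extracting an honest $\log(m/N)$ factor therefore requires charging each crossing to a single ``natural'' scale matched to its bounding-box size, together with a lower bound on the edge-mass at each scale coming from the geometry of $G$. Making this bookkeeping yield $\Omega(m^2/N)$ at each of the $\log(m/N)$ scales is the step most sensitive to $d=3$ and is the technical heart of the argument; in $d\ge 4$ the analogous dyadic machinery collapses because the base bound $c_1m^2/N$ is already tight, which is why the extra logarithm appears only in three dimensions.
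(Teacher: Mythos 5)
Your first two steps are sound: the edge-deletion argument from \lemref{boser} gives $\crs(G)\ge m-7\n$, and the vertex-sampling step (using the fact that the volume does not shrink under sampling, so the bound $p^4\crs(G)\ge p^2m-7\n$ optimizes at $p=\sqrt{14\n/m}$) correctly yields $\crs(G)\ge c_1m^2/\n$. This is a legitimate alternative route to the $d\ge4$ bound of \thmref{4d-lower-bound}. But the theorem you are proving is precisely the statement that in three dimensions one can do better by a $\log(m/\n)$ factor, and that factor is exactly the part your proposal does not deliver. You have correctly diagnosed the fatal issue with the dyadic scheme---the crossing sets produced by the quadratic base case at different scales are nested, so summing over scales proves nothing beyond a single $\Omega(m^2/\n)$---but the fix you gesture at (charging each crossing to the scale of its bounding box) is not available: the base case applied inside a sub-cube of side $2^j$ gives no control whatsoever over the bounding boxes of the crossings it finds, so there is no way to certify that scale $j$ contributes crossings disjoint from those of scale $j-1$. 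There is also a quantitative obstruction: a sub-cube of side $s$ contains at most $\binom{s^3}{2}$ edges, so the total scale-$s$ contribution is at most $O(s^6\n)$, which falls below $m^2/\n$ for all $s\lesssim(m/\n)^{1/3}$; the scales that could help span a range of length $\Theta(\log \n)$ rather than $\Theta(\log(m/\n))$, and even there the disjointness problem remains. In short, the technical heart you defer is the entire theorem.

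The paper obtains the logarithm from a completely different, number-theoretic source of disjointness. After normalizing so that every edge $uw$ satisfies $\gcd(u_1-w_1,u_2-w_2,u_3-w_3)=1$, each edge passes through exactly $\varphi(p)$ points of the \emph{essential $p$-grid} (rational grid points whose coordinates have exact denominator $p$), and the essential $p$-grids for distinct $p$ are pairwise disjoint sets of points. Hence the crossings counted at denominator $p$ are genuinely distinct from those counted at denominator $q\ne p$, which is the disjointness your spatial scales lack. A convexity argument at each $p$ gives $\Omega(m^2\varphi(p)^2/(p^3\n))$ crossings on the essential $p$-grid, and summing over $p\le(m/\n)^{1/3}$ together with the estimate $\sum_{p\le k}\varphi(p)^2/p^3=\Omega(\log k)$ (a consequence of $\sum_{p\le n}\varphi(p)\sim(3/\pi^2)n^2$ and Cauchy--Schwarz) produces the $\log(m/\n)$ factor. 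To repair your write-up you would need to replace the dyadic decomposition entirely with an argument of this kind; I do not see how to salvage the spatial version.
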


\begin{proof}
  Let $G$ be any geometric grid graph with $V(G)\subseteq\N(X,Y,Z)$,
  with $XYZ\le\n$, and $|E(G)|=m$. (That is, $G$ is a 3-D geometric
  grid graph with $m$ edges and volume at most $\n$.)  We may assume,
  without loss of generality, that no edge of $G$ contains any point
  of the $X\times Y\times Z$ grid in its interior; any such edge can
  be replaced with a shorter edge without introducing any additional
  crossings and without changing $m$ or $\n$. This assumption is subtle,
  but important, and is equivalent to assuming that, for every edge $uw$
  of $G$, $\gcd(u_1-w_1, u_2-w_2, u_3-w_3)=1$.

  For any integer, $p\ge 1$, define the $X\times Y\times Z$
  \emph{$p$-grid} as the set of points
  \[
    \{(x/p,y/p,z/p): x\in\{p,p+1,\ldots,pX\},\,
    y\in\{p,p+1,\ldots,pY\},\, z\in\{p,p+1,\ldots,pZ\}\} \enspace ,
  \]
  which we denote by $\N(pX,pY,pZ)/p$.  Observe that the size of the
  $p$-grid is at most $\n p^3$.

  In order to avoid double-counting later on, we need to define a
  sequence of point sets that are disjoint.  To achieve this, we define
  the \emph{essential $p$-grid} as follows: If $p$ is a prime number,
  then the essential $p$-grid is the $p$-grid minus the 1-grid.  Otherwise
  ($p$ is composite), let $p_1,\ldots,p_k$ be the primes in the prime
  factorization of $p$.  We begin with the points of the $p$-grid and
  then remove all points that are contained in the $p_i$-grid, for each
  $i\in\{1,\ldots,k\}$.  What remains is the essential $p$-grid. (See
  \figref{grid} for a 2-dimensional illustration.)  Observe that the
  essential $p$-grid and the essential $q$-grid have no points in common
  for any $p\neq q$.

  \begin{figure}
    \begin{center}
      \includegraphics{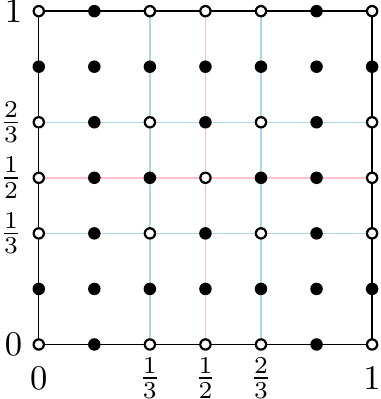}
    \end{center}
    \caption{A 2-dimensional piece of the essential 6-grid.  Removing
     the 1-grid, 2-grid, and 3-grid from the 6-grid leaves the
     essential 6-grid.}
    \figlabel{grid}
  \end{figure}

%
%

  Next, observe that each edge $uw$ of $G$
  contains the $p$-grid points
  \[
      P_{uw}^p = \{ u+(i/p)(w-u) : i\in\{1,\ldots,p-1\} \} \enspace 
  \]
  and the essential $p$-grid points
  \[
      Q_{uw}^p = \{ u+(i/p)(w-u) : i\in\{1,\ldots,p-1\}\mbox{
         and  }\gcd(i,p)=1 \} \enspace .
  \]
  The fact that $Q_{uw}^p$ contains only essential $p$-grid points follows
  from the assumption that
  $\gcd(u_1-w_1, u_2-w_2, u_3-w_3)=1$.  More specifically, the points
  in $Q_{uw}^p$ are clearly on the $p$-grid, so the only concern is
  that some of these points are on the $q$-grid for some $q<p$.  To see
  why this is not possible, observe that, if some point in $Q_{uw}^p$
  were on the $q$-grid, for some $q<p$, this would imply that
  \[  ((i/p)x,(i/p)y,(i/p)z) = (a/q,b/q,c/q) \]
  for some integers $x=w_1-u_1$, $y=w_2-u_2$, $z=w_3-u_3$, $a$, $b$, and
  $c$ such that $\gcd(i,p)=1$ and $\gcd(x,y,z)=1$. Rewriting this gives
  \begin{equation}
     (x,y,z) = (pa/(iq),pb/(iq),pc/(iq)) \enspace .   \eqlabel{snapper}
  \end{equation}
  Each value on the right hand side has a factor of $p$ in the numerator,
  so they must also have a factor of $p$ in the denominator, $iq$.
  Otherwise, $\gcd(pa/(iq),pb/(iq),pc/(iq)) > 1 = \gcd(x,y,z)$.  But this
  is not possible since $\gcd(i,p)=1$ and $q<p$, so $\gcd(iq,p)\le q < p$.

  The size of the set $Q_{uw}^p$ is a well-studied quantity and is given
  by the \emph{Euler totient function} $\varphi(p)=p\prod_{q|p}(1-1/q)$
  \cite[Section~5.5]{hardy.wright:introduction}.  Therefore, the total
  number of incidences between points of the essential $p$-grid and
  edges of $G$ is at least $m\cdot\varphi(p)$.  In understanding the
  calculations that follow, it is helpful to pretend that $\varphi(p)\ge
  cp$ for some constant $0<c<1$, though this is not strictly correct
  since, for some $p$, $\varphi(p)\in O(p/\log\log p)$.

  Let $x_1,\ldots,x_\ell$ denote the essential $p$-grid points that are
  incident on at least one edge and let $R_i$, $i\in\{1,\ldots,\ell\}$,
  denote the number of edges incident to $x_i$.  Observe, then, that
  there are $\binom{R_i}{2}$ crossing pairs of edges that cross at
  $x_i$.  From the preceding discussion, we have $\sum_{i=1}^\ell R_i
  \ge m\cdot\varphi(p)$.  Therefore, the total contribution of crossings that
  occur on the essential $p$-grid to $\crs_3(G)$ is at least
  \[
      \sum_{i=1}^\ell\binom{R_i}{2} \ge \ell \binom{m\cdot\varphi(p)/\ell}{2}
      \ge \Omega(\ell (m\cdot\varphi(p)/\ell)^2)
      = \Omega\left(\frac{m^2\varphi(p)^2}{p^3N}\right) \enspace . 
  \]
  The first inequality is an application of Jensen's Inequality
  to the function $f(x)=x(x-1)/2$.  Since $\ell\le \n p^3$,
  $\binom{m\cdot\varphi(p)/\ell}{2}\ge 0$ for $p\le \sqrt[3]{m/\n}$
  and the second inequality holds
   for $p\le \sqrt[3]{m/\n}$.

  Summing over $p$ we obtain:
  \begin{align}
     \crs(G) 
       & \ge \sum_{p=1}^{\lfloor\sqrt[3]{m/\n}\rfloor} 
                \Omega\left(\frac{m^2\varphi(p)^2}{p^3N}\right) \notag \\
       & = \Omega(m^2/\n)\sum_{p=1}^{\lfloor\sqrt[3]{m/\n}\rfloor} 
                 \varphi(p)^2/p^3 \eqlabel{a} \\
             & = \Omega(m^2/\n)\log(m/\n) \enspace . \eqlabel{b}
  \end{align}
  The step from \eqref{a} to \eqref{b}, in which we claim that
  $\sum_{i=1}^k \varphi(i)^2/i^3\in\Omega(\log k)$, is not immediate.
  To justify this step, recall the following result on Euler's totient
  function \cite[Theorem~330]{hardy.wright:introduction}:
  \[
      \sum_{i=1}^n \varphi(i) = (3/\pi^2)n^2 + O(n\log n) \enspace .
  \]
  Using this result, and the Cauchy-Schwartz Inequality, we obtain
  \[
    (3/\pi^2)n^2 + O(n\log n) = \sum_{i=1}^n \varphi(i) 
     = \sum_{i=1}^n\varphi(i)\cdot 1 \le \sqrt{\sum_{i=1}^n \varphi(i)^2}\cdot\sqrt{\sum_{i=1}^n 1}
     = \sqrt{\sum_{i=1}^n \varphi(i)^2}\cdot\sqrt{n} \enspace .
  \]
  Dividing by $\sqrt{n}$ and squaring, we obtain
  \begin{equation}
    \sum_{i=1}^n \varphi(i)^2 \ge (9/\pi^4)n^3 + O(n\log^2 n) \ge (9/\pi^4)n^3 \ge n^3/11 \enspace . \eqlabel{chimp}
  \end{equation}
  On the other hand, $\varphi(i) < i$, so
  \begin{equation}
    \sum_{i=1}^n \varphi(i)^2 
         < \sum_{i=1}^n i^2\le \sum_{i=1}^n n^2 = n^3 \enspace . \eqlabel{chomp}
  \end{equation}
  We can now justify the step from \eqref{a} to \eqref{b}
  as follows:
  \begin{align*}
    \sum_{i=1}^k \varphi(i)^2/i^3 
      &\ge \sum_{j=1}^{\lfloor\log_3 k\rfloor}
             \sum_{i=3^{j-1}+1}^{3^j} \varphi(i)^2/i^3  \\
      &\ge \sum_{j=1}^{\lfloor\log_3 k\rfloor}\frac{1}{3^{3j}}
             \left(\sum_{i=3^{j-1}+1}^{3^j} \varphi(i)^2\right)  \\
      &= \sum_{j=1}^{\lfloor\log_3 k\rfloor}\frac{1}{3^{3j}}
             \left(\sum_{i=1}^{3^j}\varphi(i)^2-\sum_{i=1}^{3^{j-1}} \varphi(i)^2\right)  \\
      &\ge \sum_{j=1}^{\lfloor\log_3 k\rfloor}\frac{1}{3^{3j}}
             \left(3^{3j}/11 - (3^{j-1})^3\right)  
              & \text{(by using \eqref{chimp} and \eqref{chomp})} \\
      &\ge \sum_{j=1}^{\lfloor\log_3 k\rfloor}\frac{1}{3^{3j}}
             \left(3^{3j}/11 - 3^{3j}/27\right)  \\
      &= \sum_{j=1}^{\lfloor\log_3 k\rfloor} \Omega(1) = \Omega(\log k) \enspace . \qedhere
  \end{align*}
\end{proof}

\subsection{The Upper Bound}
\seclabel{3d-upper-bound}

In this section, we prove the following result:

\begin{thm}\thmlabel{3d-upper-bound}
  For all $m\le\n^2/4$, $\crs_3(\n,m) \in O((m^2/\n)\log (m/\n))$.
\end{thm}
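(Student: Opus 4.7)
The plan is to exhibit an explicit 3-D geometric grid graph achieving the bound. Take $X=Y=Z=\lfloor \n^{1/3}\rfloor$ so the bounding cube has volume at most $\n$, and set a threshold $T$ with $T^3=\Theta(m/\n)$; the hypothesis $m\le \n^2/4$ leaves enough room for $T\le c\n^{1/3}$ with a constant $c<1$. Let $G$ contain every edge $uw$ with $u,w\in\N(X,Y,Z)$ and $w-u$ a primitive integer vector of $\ell_\infty$-norm at most $T$; after fine-tuning $T$ and possibly discarding a few edges (which only decreases $\crs(G)$), I arrange $|E(G)|=m$. The edge count $|E(G)|=\Theta(\n T^3)=\Theta(m)$ follows because there are $\Theta(T^3)$ primitive vectors of $\ell_\infty$-norm at most $T$ and each fits $\Theta(\n)$ translates inside the cube.

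To count crossings, recall that each crossing point lies on the essential $p$-grid for a unique $p\ge 2$. Letting $R_q$ denote the number of edges of $G$ through an essential $p$-grid point $q$,
\[
  \crs(G)=\sum_{p\ge 2}\sum_{q}\binom{R_q}{2}.
\]
The key combinatorial observation, dual to the one used in \thmref{3d-lower-bound}, is that an edge in primitive direction $v$ passes through $q$ at interior fraction $j/p$ (with $\gcd(j,p)=1$) if and only if $v$ lies in a specific residue class modulo $p$ determined by $q$ and $j$. Since a residue class modulo $p$ contains $O(T^3/p^3+1)$ primitive vectors of $\ell_\infty$-norm $\le T$, and $j$ ranges over $\varphi(p)$ values, $R_q=O(\varphi(p)(T^3/p^3+1))$.

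For the dominant range $p\le T$, $R_q=O(\varphi(p)T^3/p^3)$, and combined with the identity $\sum_q R_q=m\varphi(p)$ (each edge contains $\varphi(p)$ essential $p$-grid interior points), one has $\sum_q R_q^2 \le (\max_q R_q)(\sum_q R_q) = O(\n T^6\varphi(p)^2/p^3)$. Summing over $p=2,\ldots,T$ and using $\varphi(p)^2/p^3\le 1/p$ produces $O(\n T^6\log T)=O((m^2/\n)\log(m/\n))$, which is the target bound since $T^3=\Theta(m/\n)$.

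The remaining technical work is controlling the tail $p>T$, where each residue class modulo $p$ contains at most one primitive vector of norm $\le T$ and the per-point bound $R_q\le\varphi(p)$ is too weak by itself. The plan here is to count crossing pairs at level $p$ directly by the $4$-tuples $(v_1,j_1,v_2,j_2)$ satisfying the congruence $j_1v_1\equiv j_2v_2\pmod p$, and then multiply by the $\Theta(\n)$ starting-vertex pairs $(u_1,u_2)$ compatible with each tuple. This tail estimate is the main obstacle; the dominant $p\le T$ piece is a routine residue-class incidence count mirroring the lower-bound calculation of \thmref{3d-lower-bound} in reverse.
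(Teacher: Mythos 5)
Your construction and the dominant part of your analysis are sound: taking all primitive edges of $\ell_\infty$-length at most $T$ in a cube of volume $\n$, the residue-class argument correctly gives $R_q=O(\varphi(p)T^3/p^3)$ for $p\le T$, and together with $\sum_q R_q=m\varphi(p)$ and $\sum_{p\le T}\varphi(p)^2/p^3=O(\log T)$ this yields $O(\n T^6\log T)$ for the levels $p\le T$. The genuine gap is exactly where you place it, and it is not routine. Two edges with directions $v_1,v_2$ can only cross at a level $p$ dividing $\gcd(v_1\times v_2)$ (cross $j_1v_1-j_2v_2=p\delta$ with $v_1$ and use $\gcd(j_2,p)=1$), so the tail runs up to $p=\Theta(T^2)$, and crossings at such levels really occur in your graph (e.g.\ two coplanar ``horizontal'' directions $(a,b,0)$, $(c,d,0)$ with $|ad-bc|\approx T^2$). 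For $p>T$ the per-point bound degenerates to $R_q=O(\varphi(p))$, and combined with $\sum_q R_q=m\varphi(p)$ it gives only $\sum_{p>T}O(m\varphi(p)^2)=O(mT^6)=O(\n T^9)$, off by a polynomial factor. Your $4$-tuple plan, once the congruence $j_1v_1\equiv j_2v_2\pmod p$ is unwound, reduces to showing $\sum_{v_1,v_2}\gcd(v_1\times v_2)=O(T^6\log T)$ over non-parallel primitive vectors of norm at most $T$ --- equivalently, a lattice-point count for $\Z v_1+d\Z^3$ in the box $[-T,T]^3$ that is uniform in $v_1$ and in $d$ up to $\Theta(T^2)$, where the naive ``$T^3/d^2+O(1)$ per residue class'' estimate again loses a polynomial factor when summed against $\varphi(d)$. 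I believe the estimate is true, but it needs a real argument (one must control the primitive vectors of these lattices lying off the line $\R v_1$), and without it the proof is incomplete.

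The paper sidesteps all of this with a much simpler construction: $K_{k^2,k^2}$ drawn on the two-layer $k\times k\times 2$ grid (\lemref{2-layer}), tiled $\n/k^2$ times with $k=\sqrt{m/\n}$. For a fixed edge $uw$ spanning the two layers, every crossing edge lies in a plane through $uw$; these planes are classified by their ``skip'' $r$, there are at most $4r$ of skip $r$ and each carries at most $(k/r)^2$ crossing edges, so $\sum_{r=1}^k 4r(k/r)^2=O(k^2\log k)$ crossings per edge and $O(k^6\log k)$ in total. The harmonic sum produces the same logarithmic factor that your $\sum\varphi(p)^2/p^3$ does, but because the drawing is bipartite between two parallel layers the skip classification is exhaustive and there is no tail to control. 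If you want to salvage the cube construction, the cleanest target is the per-edge form of your tail bound, namely $\sum_{v_2}\left(\gcd(v_1\times v_2)-1\right)=O(T^3\log T)$ uniformly over primitive $v_1$; that single estimate would replace both of your ranges of $p$ at once.
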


The proof of \thmref{3d-upper-bound} follows easily from the following
lemma:
\begin{lem}\lemlabel{2-layer}
  There exists a 3-D grid drawing of the complete bipartite graph
  $K_{k^2,k^2}$ on the $k\times k\times 2$ grid with $O(k^6\log k)$
  crossings.
\end{lem}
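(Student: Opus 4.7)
The plan is to use the natural drawing: place the two color classes of $K_{k^2,k^2}$ on the two $k\times k$ layers of the grid (one class at $z=1$, the other at $z=2$) and connect every inter-class pair by a straight segment. Since each of the $k^4$ edges travels between adjacent $z$-levels, its open interior has $z$-coordinate strictly in $(1,2)$, so no vertex lies in the interior of any edge and the drawing is automatically proper. The first substantive step is to derive a crossing criterion: parametrising edges $(a,b)$ and $(a',b')$ by $(1-s)(a,1)+s(b,2)$ and matching $z$-coordinates forces $s=s'$ at any interior intersection; setting $r=s/(1-s)$ then reduces the crossing condition to the clean form
\[
    a-a' \;=\; r\,(b'-b), \qquad r>0,
\]
with both sides nonzero, i.e., the integer vectors $a-a'$ and $b'-b$ are nonzero and point in the same direction.

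The counting therefore reduces to a sum over parallel pairs of lattice vectors in $\{1,\ldots,k\}^2$. Define
\[
    N(w) \;=\; \max(0,\,k-|w_1|)\cdot\max(0,\,k-|w_2|),
\]
the number of ordered pairs of points in $\{1,\ldots,k\}^2$ differing by $w$. Every ordered crossing pair of edges is uniquely encoded by a nonzero primitive integer vector $d$ together with positive integers $\alpha,\beta$ via $a-a'=\alpha d$ and $b'-b=\beta d$, so the number of ordered crossing pairs equals
\[
    \sum_{d\text{ primitive}}\Bigl(\sum_{\alpha\ge 1}N(\alpha d)\Bigr)^{\!2}.
\]
A direct estimate (bounding the truncated sum by the corresponding integral) gives $\sum_{\alpha\ge 1}N(\alpha d)=O(k^3/M(d))$, where $M(d)=\max(|d_1|,|d_2|)$.

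Finally, the number of primitive integer vectors with $M(d)=M$ is $O(M)$ for each $M\ge 1$ (a standard Farey-type count), so summing over scales gives
\[
    \sum_{M=1}^{k} O(M)\cdot O\!\left(\frac{k^6}{M^2}\right) \;=\; O(k^6\log k)
\]
ordered crossings, and hence the same bound (up to a factor of $2$) on unordered crossings. The main obstacle will be bookkeeping rather than any single estimate: checking that the primitive-direction parametrisation is a bijection onto exactly the ordered pairs we wish to count, that axis-parallel directions are handled uniformly by the formula for $N$, and that the harmonic tail $\sum_M 1/M$ is precisely what produces the logarithmic factor matching $\log(m/\n)$ in \thmref{3d-upper-bound}.
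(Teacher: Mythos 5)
Your proposal is correct and is essentially the paper's argument with the order of summation transposed: the paper fixes an edge $uw$ and groups the edges crossing it into planes through $uw$ classified by the \emph{skip} $r$ (the $\ell_\infty$-norm of the primitive horizontal direction toward the nearest lattice point), using that there are $O(r)$ such planes each carrying at most $(k/r)^2$ crossing edges, whereas you count all ordered crossing pairs at once, grouped by the primitive direction $d$ of $a-a'$ via an explicit algebraic crossing criterion. The two key estimates --- $O(M)$ primitive directions at scale $M$ (lattice points on the boundary of a square of side $2M$), and $O(k/M)$ lattice points per line at that spacing, which is your bound $\sum_{\alpha\ge1}N(\alpha d)=O(k^3/M)$ --- and the harmonic sum producing the $\log k$ factor are identical in both proofs.
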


Before proving \lemref{2-layer}, we first show how it implies
\thmref{3d-upper-bound}: For simplicity, in what follows, assume
$\sqrt{\n}$, $\sqrt{m/\n}$, and $\n/\sqrt{m}$ are each integers.
Apply \lemref{2-layer}, with $k=\sqrt{m/\n}$ and tile the
$\sqrt{\n}\times\sqrt{\n}\times 2$ grid with $\n/k^2$ copies of
this drawing. The resulting geometric graph has $2\n$ vertices,
$m=\n k^4/k^2=\n k^2$ edges and
\[ 
   O((\n/k^2)k^6\log k) = O(\n k^4\log k) = O((m^2/\n)\log(m/\n))
\] 
crossings, as required by \thmref{3d-upper-bound}.

\begin{proof}[Proof of \lemref{2-layer}]
  The drawing of $K_{k^2,k^2}$ is the obvious one; each point of the
  $k\times k\times2$ grid with z-coordinate 1 is connected by an edge
  to every point with z-coordinate 2.  We denote the resulting
  geometric graph by $G_{k^2}$.

  We start by considering some edge $uw$ with $u_3=1$ (so
  $w_3=2$) and counting the number of edges that intersect $uw$.
  Let $\pi_1,\pi_2,\ldots$ be the planes that contain $uw$ and at least
  one additional vertex.  Observe that each plane $\pi_j$ contains, and
  is uniquely determined by, a line in the plane $\{z\in\R^3: z_3=1\}$
  that contains $u$ and some vertex, $x$, and such that $ux$ does not
  contain any other point of $\Z^3$ i.e.,
  $\gcd(u_1-x_1,u_2-x_2)=1$ (see \figref{planes}).
  \begin{figure}
    \centering{\includegraphics{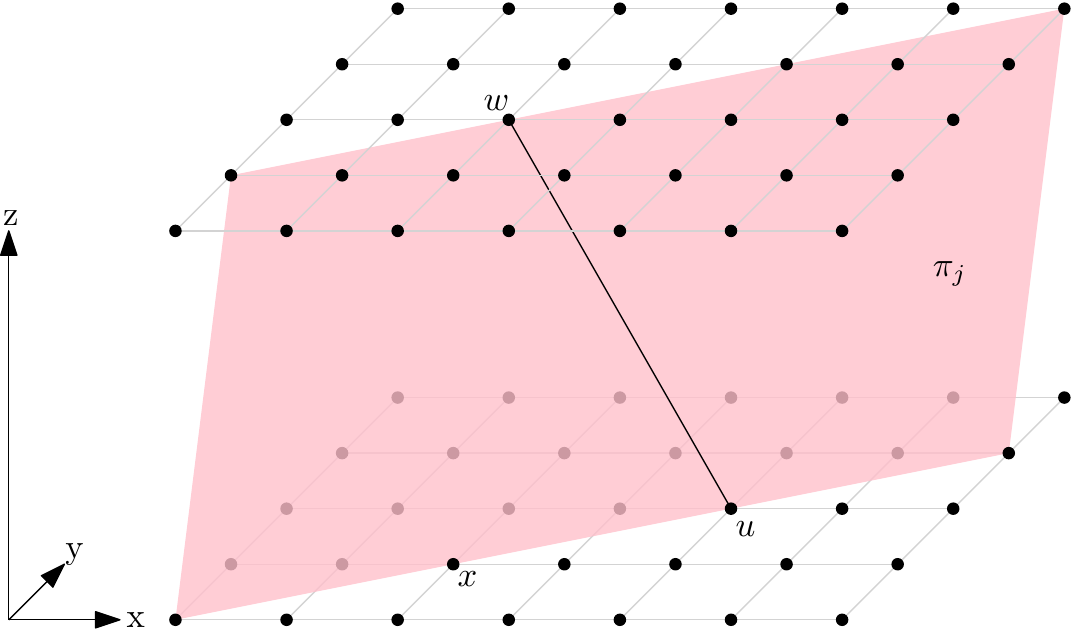}}
    \caption{A plane $\pi_j$, defined by a point $x$, with
      $\skp(\pi_j)=2$. (z-coordinates are exaggerated.)}
    \figlabel{planes}
  \end{figure}
  Define the \emph{skip} of $\pi_j$ as
  \[
     \skp(\pi_j)=\max\{|u_1-x_1|,|u_2-x_2|\} \enspace .
  \]
  Observe that, if $\skp(\pi_j)=r$, then $\pi_j$ contains at most $2k/r$
  vertices of $G$ other than $u$ and $w$ and therefore contains at most
  $(k/r)^2$ edges that cross $uw$.  Furthermore, the number of planes
  $\pi_j$ such that $\skp(\pi_j)=r$ is at most $4r$ since each such
  plane is defined by two antipodal lattice points on the boundary of a
  square of side length $2r$ centered at $u$; see \figref{skip-count}.
  Therefore, the total number of edges that cross $uw$ is at most
  \begin{equation}
     \sum_{r=1}^k 4r(k/r)^2 = 4k^2\sum_{r=1}^k 1/r 
       \le 4k^2\ln k + O(k^2) \enspace .
        \eqlabel{critical}
  \end{equation}
  Since this is true for each of the $k^4$ edges, $uw$, we conclude
  that the total number of crossings in $G_{k^2}$ is at most $2k^6\ln
  k+O(k^6)\in O(k^6\log k)$, as required.
  \begin{figure}
    \centering{\includegraphics{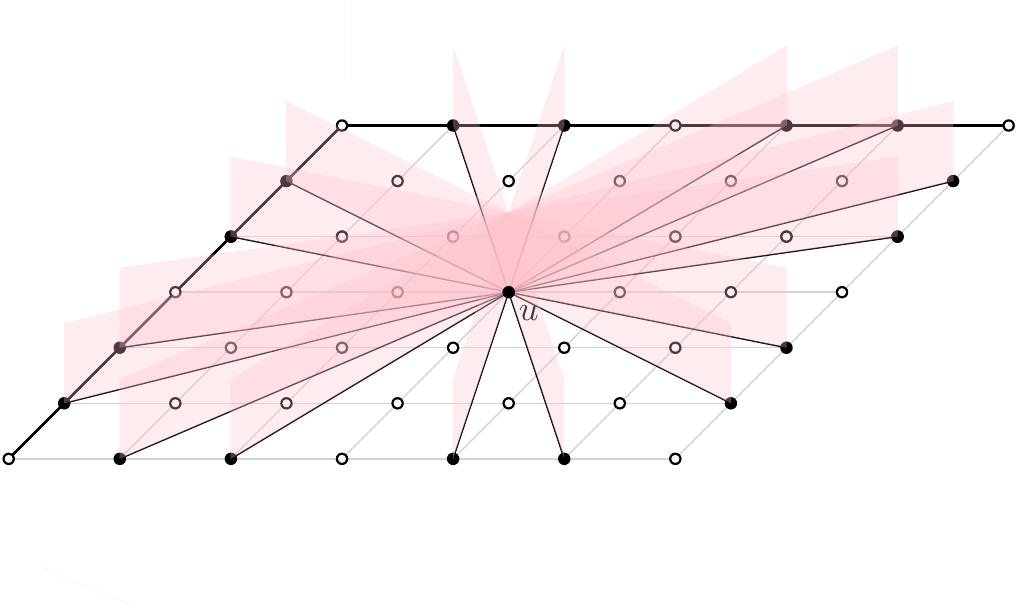}}
    \caption{Each plane $\pi_j$ with $\skp(\pi_j)=r$ is defined by two
      antipodal lattice points on the boundary of a square of side length
      $2r$ that is centered at $u$.}
    \figlabel{skip-count}
  \end{figure}
\end{proof}

\section{Higher Dimensions}

Next, we prove matching upper and lower bounds on $\crs_d(\n,m)$ for
$d\ge 4$.

\subsection{The Lower Bound}

\begin{thm}\thmlabel{4d-lower-bound}
  For all $m\ge 2^d\n$, $\crs_d(\n,m)\in\Omega(m^2/\n)$.  In particular,
  $\crs_d(\n,m)\ge \frac{1}{2}(m^2/(2^d-1\n) - m)$, for all $m\ge 0$.
\end{thm}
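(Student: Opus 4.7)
The plan is to reduce the inequality to a Turán-type bound on an auxiliary \emph{conflict graph} of crossings. Let $G$ be a $d$-D geometric grid graph with $m$ edges and volume at most $\n$. I would define a graph $H$ whose vertex set is $E(G)$ and whose edges are exactly the unordered pairs of edges of $G$ that cross in $G$. By construction $|V(H)|=m$ and $|E(H)|=\crs(G)$, so the goal becomes a lower bound on $|E(H)|$.

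The first step is the key observation: every independent set $I$ of $H$ is a crossing-free collection of edges of $G$, and the subgraph $(V(G),I)$ is itself a $d$-D geometric grid graph of volume at most $\n$ (restricting to a subset of edges cannot increase the volume of the grid containing the endpoints). By \lemref{boser} this subgraph has at most $(2^d-1)\n$ edges, so $\alpha(H)\le k$ where $k=(2^d-1)\n$.

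The second step is to convert this bound on the independence number into a lower bound on $|E(H)|$ by Turán's theorem. The complement $\overline{H}$ has clique number $\alpha(H)\le k$, so Turán's theorem gives $|E(\overline{H})|\le(1-1/k)m^2/2$. Subtracting from $\binom{m}{2}$ yields
\[
  \crs(G)=|E(H)|\ge\binom{m}{2}-\left(1-\tfrac{1}{k}\right)\tfrac{m^2}{2}\ge\tfrac{m^2}{2k}-\tfrac{m}{2}=\tfrac{1}{2}\left(\tfrac{m^2}{(2^d-1)\n}-m\right)\enspace,
\]
which is exactly the claimed explicit inequality. For $m\ge2^d\n$, the quadratic term dominates and we obtain $\crs_d(\n,m)\in\Omega(m^2/\n)$. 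Equivalently, one could apply the Caro--Wei inequality $\alpha(H)\ge\sum_v 1/(1+\deg_H(v))$ together with the convexity of $x\mapsto1/(1+x)$ (Jensen's inequality), yielding $\alpha(H)\ge m^2/(m+2|E(H)|)$, which rearranges to the same bound.

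There is no real obstacle; the whole argument is a one-line consequence of \lemref{boser} once crossings are interpreted as edges in an auxiliary graph with a controlled independence number. The only thing worth checking carefully is that Bose et al.'s lemma applies to every edge-subset of $G$ (not only $G$ itself), which is immediate because deleting edges does not change the underlying grid volume.
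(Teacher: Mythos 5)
Your proof is correct, and it takes a genuinely different route from the paper. The paper never invokes \lemref{boser} here: instead it pigeonholes the $m$ edges by their \emph{midpoints}, observing that every midpoint lies among the at most $(2^d-1)\n$ points of the essential 2-grid and that any two edges sharing a midpoint cross there; convexity of $\binom{x}{2}$ then gives exactly $\frac{1}{2}\bigl(m^2/((2^d-1)\n)-m\bigr)$. In effect the paper exhibits a partition of the conflict graph $H$ into at most $(2^d-1)\n$ cliques, which is a stronger structural fact than your bound $\alpha(H)\le(2^d-1)\n$, but since Tur\'an's theorem is tight precisely for disjoint unions of cliques the two arguments land on the identical constant. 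Your route is more modular: it converts \emph{any} linear edge bound for crossing-free graphs into a quadratic crossing bound, and it is notably sharper than the naive edge-deletion argument (which the paper mentions in its related-work section and which only yields the linear bound $m-(2^d-1)\n$). What it gives up is geometric information --- the paper's proof tells you \emph{where} the crossings are (at half-integer points), and that midpoint/essential-$p$-grid machinery is exactly what gets refined, with $p$ ranging over all integers up to $\sqrt[3]{m/\n}$, to extract the extra $\log(m/\n)$ factor in the 3-dimensional lower bound of Theorem~\ref{thm:3d-lower-bound}; your Tur\'an argument has no analogous refinement. One small point you handled correctly and that does deserve the care you gave it: the independent set must be viewed as the spanning subgraph $(V(G),I)$ so that the volume is unchanged before applying \lemref{boser}.
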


\begin{proof}
  Let $G$ be any geometric grid graph with $m$ edges whose vertex set
  is contained in the $X_1\times \cdots\times X_d$ grid of volume
  at most $\n$.  Note that,
  for each edge $uw\in E(G)$, the \emph{midpoint} $(u+w)/2$ of $uw$
  is contained in the $X_1\times\cdots\times X_d$ essential 2-grid:
  \[
    \N(2X_1,\ldots,2X_d)/2=\{(x_1,\ldots,x_d)/2:
         \mbox{$x_i \in \{2,3,\ldots,2X_i\}$, 
               $i\in\{1,\ldots,d\}$}\} \setminus \N(X_1,\ldots,X_d)\enspace .
  \]
  This 2-grid contains $K\le (2^d-1)\n$ points.  Let $R_i$ be the number of
  edges of $G$ whose midpoint is the $i$th point of this 2-grid. Then
  the number of crossings in $G$ is
  \begin{align*}
   \crs(G) &\ge \sum_{i:R_i\ge 1}\binom{R_i}{2} \\
    & = \frac{1}{2}\left(\sum_{i:R_i\ge 1}(R_i^2-R_i)\right) \\
    & = \frac{1}{2}\left(\sum_{i:R_i\ge 1} R_i^2 - m\right) \\
    & \ge \frac{1}{2}\left(K(m/K)^2 - m\right) \\
    & = \frac{1}{2}\left(m^2/K - m\right) \\
    & \ge \frac{1}{2}\left(\frac{m^2}{(2^{d}-1)\n} - m\right) \enspace . \qedhere
  \end{align*}
%
\end{proof}

\subsection{The Upper Bound}

\begin{thm}\thmlabel{4d-upper-bound}
  For all $d\ge 4$ and all $m\le\n^2/4$, $\crs_d(\n,m)\in O(m^2/\n)$.
\end{thm}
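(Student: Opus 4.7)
The plan is to follow the two-step scheme of \thmref{3d-upper-bound}: first prove a base construction lemma yielding a dense drawing on a small-volume $d$-dimensional grid, then tile disjoint copies. Specifically, I aim to prove a base lemma analogous to \lemref{2-layer}: for every integer $k$ and every $d\ge 4$, there exists a $d$-D grid drawing of $K_{k^2,k^2}$ on a grid of volume $\Theta(k^2)$ with $O(k^6)$ crossings---crucially, without the $\log k$ factor appearing in the 3D case. Assuming this, I tile $\Theta(\n/k^2)$ disjoint copies of the base drawing into the given $\n$-volume grid exactly as in the argument following \lemref{2-layer}; the resulting drawing has $m=\Theta(\n k^2)$ edges and $O(\n k^4)=O(m^2/\n)$ crossings, matching the theorem. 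Taking $k=\lceil\sqrt{m/\n}\,\rceil$ and handling the usual rounding (deleting surplus edges, using the trivial crossing-free bound of \lemref{boser} when $m$ is small) covers the entire range $m\le\n^2/4$.

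For the base construction in $d=4$, the plan is to use the 4D grid $k\times k\times 2\times 2$ (and its trivial extension to $k\times k\times 2\times 2\times 1\times\cdots\times 1$ for $d>4$), placing the two color classes of $K_{k^2,k^2}$ on carefully chosen pairs of $k\times k$ slabs. Recall that in the proof of \lemref{2-layer}, each edge $uw$ lies in roughly $4r$ 2-planes of skip $r$, each of which carries at most $(k/r)^2$ other edges crossing $uw$; the sum $\sum_{r=1}^{k} 4r\cdot(k/r)^2=\Theta(k^2\log k)$ produces the per-edge crossing count and hence the $\log k$ factor. In 4D, the requirement that two edges lie in a common 2-plane imposes an additional coincidence on the 4th coordinates of their endpoints, which drastically limits which 2-planes are populated by many edges. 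If the 4th-coordinate assignment is chosen so that each 2-plane through $uw$ carries only $O(1)$ other edges on average, then the per-edge crossing count becomes $O(k^2)$, matching the midpoint-based lower bound of \thmref{4d-lower-bound} and giving $O(k^6)$ total.

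The main obstacle will be the actual construction of the 4D placement and its analysis. A natural attempt is to assign each vertex in the two base layers a 4th coordinate from $\{1,2\}$ using either an algebraic rule (e.g., the parity of $i+j$) or a carefully chosen labeling. The crossing count then splits into two regimes: pairs of edges whose 4th-coordinate trajectories coincide, which effectively reduce to 3D sub-problems on sparser sub-bipartite graphs and therefore inherit a smaller (fractional) log contribution; and pairs whose trajectories differ, for which the intersection parameter $t^*$ is uniquely pinned by the $z_4$-matching condition, and the remaining $(i,j)$-coordinate conditions can be counted directly. The delicate step is balancing these two contributions so the total is $O(k^6)$ and no $\log k$ factor survives; this balancing constraint will guide the specific choice of 4th-coordinate assignment and is the main technical work I expect to be needed.
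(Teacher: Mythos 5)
Your high-level scheme (a dense base drawing of a complete bipartite graph on a small grid, followed by tiling) is exactly the paper's, and the tiling/reduction step is fine. The gap is in the base lemma: the construction you propose --- $K_{k^2,k^2}$ on the $k\times k\times 2\times 2$ grid with a $\{1,2\}$-valued fourth coordinate --- cannot achieve $O(k^6)$ crossings for \emph{any} choice of labeling, and this is provable from \thmref{3d-lower-bound} itself. Since the last two coordinates take only the values $1$ and $2$, the $4k^2$ grid points split into four parallel $k\times k$ slabs indexed by $(x_3,x_4)\in\{1,2\}^2$. Every edge joins some (possibly equal) pair of slabs, giving at most $10$ classes of edges, and the affine hull of any two slabs has dimension at most $3$ and meets the grid in at most $2k^2$ points forming a unimodular (hence crossing-preserving) image of a sub-grid of the $k\times k\times 2$ grid. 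By pigeonhole some class carries $m'\ge k^4/10$ edges inside a $3$-D grid of volume $N'\le 2k^2$; since $m'\ge 8N'$ for large $k$, \thmref{3d-lower-bound} forces $\Omega\left((m'^2/N')\log(m'/N')\right)=\Omega(k^6\log k)$ crossings from that class alone. So the $\log k$ is not an artifact of the $3$-D skip count that a clever fourth-coordinate assignment can cancel; it is forced whenever a constant fraction of the edges is confined to a $3$-dimensional slice of volume $O(k^2)$, which happens automatically when the extra coordinates range over $O(1)$ values.

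The paper avoids this by making the two layers genuinely $(d-1)$-dimensional: the base drawing is $K_{\ell,\ell}$ with $\ell=k^{d-1}$ on the $k\times\cdots\times k\times 2$ grid (in your normalization, $K_{k^2,k^2}$ on the $k^{2/(d-1)}\times\cdots\times k^{2/(d-1)}\times 2$ grid of volume $2k^2$). The skip analysis of \lemref{2-layer} then changes in exactly one place: the number of $2$-planes through $uw$ with skip $r$ grows like $r^{d-2}$ rather than $r$, so the per-edge sum becomes $\sum_{r=1}^{k}O(r^{d-2})(k/r)^2=O\!\left(k^2\sum_{r=1}^{k}r^{d-4}\right)=O(k^{d-1})=O(\ell)$ for $d\ge4$, with no logarithm, giving $O(\ell^3)$ crossings total. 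With that replacement for your base lemma, the rest of your argument goes through.
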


\begin{proof}
  Let $\ell=k^{d-1}$ for some integer $k$.  As in the proof of
  \thmref{3d-upper-bound}, it suffices to show that one can draw the
  complete bipartite graph $K_{\ell,\ell}$ on the $k\times\cdots\times
  k\times 2$ grid so that it has $O(k^{3(d-1)})=O(\ell^3)$ crossings.

  The remainder of this proof has the same structure as the proof of
  \lemref{2-layer}.  The drawing of $K_{\ell,\ell}$ we use is the graph
  $G$ with $V(G)=\N(k,\ldots,k,2)$ and
  \[
    \E(G) = \{ uw\in V(G)^2 :\mbox{$u_d = 1$ and $w_d = 2$} \} \enspace .
  \]
  
  Consider some edge $uw$ of $G$ with $u_d=1$ and $w_d=2$.  Our strategy
  is to upper bound the number of edges that cross $uw$.  Any edge $xy$
  that crosses $uw$ is contained in some plane, $\pi$, that contains $uw$
  and $xy$.  Without loss of generality, assume $x_d=1$.  Let $x'$ be some
  point of the integer lattice $\Z^d$ that is on the line containing
  $ux$ and such that $ux'$ contains no point of $\Z^d$. (That is,
  $\gcd(u_1-x'_1,\ldots,u_{d-1}-x'_{d-1}) = 1$.)

  The plane $\pi$ that contains $uw$ and $xy$ can be expressed as
  \[
     \pi = \{ u + t(w-u) + s(x'-u) : s,t\in\R\} \enspace .
  \]
  Restricted to the subspace $S_u=\{z\in\R^d: z_d=1\}$, $\pi$
  becomes a line
  \[
     L_u = \pi\cap S_u = \{ u + s(x'-u) : s\in\R\} \enspace .
  \]
  Similarly, restricted to the subspace $S_w=\{z\in\R^d: z_d=2\}$,
  $\pi$ becomes the parallel line:
  \[
     L_w = \pi\cap S_w = \{ w + s(x'-u) : s\in\R\} \enspace .
  \]
  Observe that, since there is no point on the segment $ux'$, the only
  points of the integer lattice $\Z^d$ contained in $L_u$ are obtained
  when the parameter $s$ is an integer:
  \[
     L_u\cap\Z^d = \{ u + s(x'-u) : s\in\Z\} \enspace 
  \]
  and, similarly,
  \[
     L_w\cap\Z^d = \{ w + s(x'-u) : s\in\Z\} \enspace  .
  \]
  If we define $r=\max\{|x'_i-u_i| : i\in\{1,\ldots,d-1\}\}$, then we see
  that the number of vertices of $G$, other than $u$ and $w$, intersected
  by each of $L_u$ and $L_w$ is at most $k/r$ (since $G$'s vertices are
  contained in a box whose longest side has length $k$).  In this case,
  we define the \emph{skip} of the plane $\pi$ to be $r$.
  
  Now, consider all the planes that contain $uw$ and some other vertex
  of $G$.  We wish to determine the number of such planes with skip $r$.
  Each such plane is defined by two antipodal grid points on the boundary
  of a $(d-1)$-hypercube of side length $2r$, centered at $u$, that is
  contained in the $(d-1)$-dimensional subspace $\{z\in\R^d:z_d=1\}$.
  This hypercube has $2(d-1)$ facets and each facet contains
  $(2r+1)^{d-2}$ grid points.  Therefore, the total number of planes with
  skip $r$ is at most
  \[
     (d-1)(2r+1)^{d-2} = (d-1)\left((2r)^{d-2} + O(r^{d-3})\right)
  \]
  Each plane with skip $r$ contains at most $(k/r)^2$ edges that cross
  $uw$.  Therefore, the number, $X_{uw}$, of edges that cross $uw$
  is at most
  \begin{align*}
     X_{uw} & \le \sum_{r=1}^k(d-1)\left((2r)^{d-2}+O(r^{d-3})\right)(k/r)^2 \\
         & = (d-1)2^{d-2}k^2\sum_{r=1}^k\left(r^{d-4} + O(r^{d-5})\right) \\
         & \le (d-1)2^{d-2}k^2\left(\frac{k^{d-3}}{d-3} 
             + O\left(k^{d-4}\log k\right)\right) 
               & \text{(since $d\ge 4$)} \\
         & = \frac{(d-1)2^{d-2}k^{d-1}}{d-3} + O(k^{d-2}\log k) \\
         & = O(k^{d-1}) = O(\ell).
  \end{align*}
  Since there are $\ell^2$ edges, the total number of crossings is
  therefore $O(\ell^3)$, as required.
\end{proof}

\section{The Number of Non-Crossing Graphs}
\seclabel{counting}

In this section, we show that, for dimensions $d\ge 4$, $\ncs_d(\n)\in
2^{\Theta(\n\log\n)}$, i.e., the maximum number of crossing-free graphs
that can be drawn on any grid of volume $\n$ is $2^{\Theta(\n\log\n)}$.

The upper bound follows easily from \lemref{boser} which states that
any crossing-free $d$-D geometric grid graph of volume $\n$ has at most
$(2^d-1)\n$ edges.  Therefore, any such graph corresponds to one of
the ways of choosing at most $(2^d-1)\n$ edges from among the at most
$\binom{\n}{2}$ possible edges.  Therefore, the number of such graphs is
at most
\[
  \ncs_d(\n) \le 2^{(2^d-1)\n}\binom{\binom{\n}{2}}{(2^d-1)\n} 
      \le 2^{(2^d-1)\n} (\n^{2})^{(2^d-1)\n} 
      = 2^{2(2^d-1)\n\log\n+(2^d-1)\n} 
      \in 2^{O(\n\log\n)} \enspace .
\]
The preceding argument is standard and is used, for example, by B\'arat
\etal~\cite[Lemma~4]{barat.matousek.ea:bounded-degree} for upper-bounding
the maximum number of crossing-free plane graphs with $m$ edges that can
be drawn on any particular set of $n$ points in the plane.

Next we show that the number of crossing-free geometric graphs that can
be drawn on the $\n^{1/(d-1)}\times \cdots\times\n^{1/(d-1)}\times 2$
grid is at least $2^{\Omega(\n\log\n)}$.

\begin{thm}\thmlabel{4d-counting}
  For all $d\ge 4$, $\ncs_d(\n) \in 2^{\Omega(\n\log\n)}$.
\end{thm}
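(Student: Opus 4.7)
The plan is to exhibit $2^{\Omega(\n\log\n)}$ pairwise distinct crossing-free perfect matchings between the two layers of the $\n^{1/(d-1)}\times\cdots\times\n^{1/(d-1)}\times 2$ grid, building directly on the embedding of $K_{\ell,\ell}$ used in the proof of \thmref{4d-upper-bound}. Write $\ell=\Theta(\n)$ for the number of vertices in each layer, and fix this embedding, which puts one side of $K_{\ell,\ell}$ into each layer. The only external input the argument needs is that this embedding satisfies $\crs(K_{\ell,\ell})\in O(\ell^3)$, which is established in the proof of \thmref{4d-upper-bound}.

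The first step is to analyse a uniformly random perfect matching $M\subseteq E(K_{\ell,\ell})$, i.e.\ pick a uniformly random permutation in $S_\ell$. Every fixed pair of vertex-disjoint edges of $K_{\ell,\ell}$ appears in $M$ with probability $1/(\ell(\ell-1))$, so by linearity of expectation
\[
  E[X(M)] \;=\; \frac{\crs(K_{\ell,\ell})}{\ell(\ell-1)} \;=\; O(\ell),
\]
where $X(M)$ is the number of crossings of $M$. Markov's inequality then yields an absolute constant $K$ such that at least half of the $\ell!$ perfect matchings satisfy $X(M)\le K\ell$.

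The second step extracts a large crossing-free sub-matching from any such low-crossing $M$. Form the conflict graph $H(M)$, whose vertices are the edges of $M$ and whose edges record the crossings of $M$; so $H(M)$ has $\ell$ vertices and at most $K\ell$ edges. The Caro--Wei bound gives $\alpha(H(M))\ge \ell^2/(\ell+2X(M))\ge \ell/(2K+1)$, and an independent set of $H(M)$ is precisely a crossing-free sub-matching of $M$. Thus I can define a deterministic rule $f(M)$ that outputs, say, the lexicographically first crossing-free sub-matching of $M$ of size at least $c\ell$, where $c=1/(2K+1)$. The map $f$ is defined on at least $\ell!/2$ matchings.

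The final step is a pigeonhole / double-counting argument. Every image $G=f(M)$ is a crossing-free matching with $|G|\ge c\ell$, and a fixed such $G$ is contained in exactly $(\ell-|G|)!\le((1-c)\ell)!$ perfect matchings of $K_{\ell,\ell}$. Therefore
\[
  \bigl|\mathrm{image}(f)\bigr| \;\ge\; \frac{\ell!/2}{((1-c)\ell)!} \;=\; 2^{\Omega(\ell\log\ell)} \;=\; 2^{\Omega(\n\log\n)}
\]
by Stirling, and each element of $\mathrm{image}(f)$ is a distinct crossing-free geometric grid graph. The one delicate point, and the place the argument could fail, is the positivity of the constant $c$, which reduces precisely to the $O(\ell)$ bound on $E[X(M)]$ supplied by \thmref{4d-upper-bound}; this is also the reason the same argument yields only $2^{\Omega(\n)}$ when $d=3$, because there $\crs(K_{\ell,\ell})$ picks up an extra $\log\ell$ factor, the guaranteed sub-matching size shrinks to $\Theta(\ell/\log\ell)$, and the double count collapses.
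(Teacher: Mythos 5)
Your proof is correct, but it takes a genuinely different route from the paper's. The paper runs the Ajtai-\etal-style recursive count: it fixes the same drawing $G_\n$ of $K_{\ell,\ell}$ with $\ell=\n/2$, uses the per-edge fact from the proof of \thmref{4d-upper-bound} that every edge crosses at most $c\n-1$ others, and sets up the recurrence $f(m)\ge f(m-1)+f(m-c\n)$ for the minimum number of crossing-free subgraphs of any $m$-edge subgraph; iterating gives $f(\n^2)\ge(c\n)^{\Omega(\n)}=2^{\Omega(\n\log\n)}$. You instead argue probabilistically: a uniformly random perfect matching in that drawing has expected crossing number $\crs(G_\n)/(\ell(\ell-1))=O(\ell)$ (and pairs of crossing edges sharing a vertex, if any existed, never co-occur in a matching, so the bound only improves), Markov plus Caro--Wei extracts a crossing-free sub-matching of size $\Omega(\ell)$ from at least half of the $\ell!$ perfect matchings, and the double count $(\ell!/2)/((1-c)\ell)!=2^{\Omega(\ell\log\ell)}$ finishes. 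Your argument needs only the global bound $\crs(G_\n)=O(\ell^3)$ rather than the per-edge bound the paper's recursion leans on, and it directly produces $2^{\Omega(\n\log\n)}$ crossing-free \emph{matchings} of linear size, so it yields \corref{matchings} simultaneously (the paper obtains that corollary by a separate tweak of its recursion, discarding also the edges incident to the chosen edge's endpoints); the paper's recursion, in exchange, counts all crossing-free subgraphs directly and avoids Stirling-type estimates and the extraction step. Both arguments hinge on the same geometric input, and, as you observe, both degrade to a trivial bound in three dimensions because of the extra logarithmic factor in $\crs_3$.
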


\begin{proof}
  Let $G_\n$ denote the complete bipartite geometric graph described in
  the proof of \thmref{4d-upper-bound} with the value $\ell=\n/2$ (assuming,
  only for simplicity, that $(\n/2)^{1/(d-1)}$ is an integer).  For a geometric
  graph, $G$, let $\ncs(G)$ denote the number of crossing-free subgraphs
  of $G$ and define
  \[
     f(m) = \min\{ \ncs(G) : 
                \mbox{$G$ is a subgraph of $G_{\n}$ having $m$ edges} \}
     \enspace . 
  \]
  Our goal is to lower-bound $f(\n^2)$.  In order to do this, we establish
  a recurrence inequality and base cases.
  
  For our base cases, we have 
  \[ 
     f(m)\ge 1 \enspace ,
  \]
  for all $m\ge 0$, since the subgraph of $G_{\n}$ with no edges is
  crossing-free.
  
  Let $c$ be a constant so that $c\n$ is an integer and, for all
  sufficiently large $\n$ every edge of $G_\n$ intersects at most $c\n-1$
  other edges.  By the proof of \thmref{4d-upper-bound} such a constant
  $c$ exists.  Fix any subgraph, $G$, of $G_{\n}$ that has $m\ge c\n$ edges.
  From $G$, select any edge $e$. Then there are at least $f(m-1)$
  subgraphs of $G$ that do not include $e$.  Furthermore, $e$ intersects
  at most $c\n-1$ other edges of $G$, so there are at least $f(m-c\n)$
  subgraphs of $G$ that include $e$.  Therefore,
  \[  
     f(m) \ge f(m-1) + f(m-c\n) \enspace ,
  \]
  for $m\ge c\n$.  Repeatedly expanding the first term gives:
  \begin{align}
  f(m) & \ge  f(m-1) + f(m-c\n) \notag \\
         & \ge  f(m-1) + f(m-2c\n) \notag \\
         & \ge  f(m-2) + 2f(m-2c\n) \notag \\
         & \ge  f(m-3) + 3f(m-2c\n) \notag \\
         & \,\,\,\vdots  \notag \\
         & \ge  c\n\times f(m-2c\n) \enspace , \eqlabel{blech}
  \end{align}
  for $m\ge 2c\n$.
  
  For an integer $t$, we can iterate \eqref{blech} $t$ times to obtain
  \begin{equation}
     f(m) \ge (c\n)^t\times f(m-2ct\n) \enspace ,
     \eqlabel{recur-a}
  \end{equation} 
  for $m\ge 2ct\n$.  Taking $m=\n^2$, \eqref{recur-a} becomes
  \[
     f(\n^2) \ge (c\n)^t\times f(\n^2-2ct\n) \ge (c\n)^t \enspace ,
  \]
  for $t \le\n/(2c)$.  Taking $t=\lfloor \n/(2c)\rfloor$ then yields the
  desired result:
  \[
     f(\n^2) \ge (c\n)^{\lfloor\n/(2c)\rfloor} 
            \ge (c\n)^{\n/(2c) - 1} 
            = 2^{(\frac{\n}{2c}-1)(\log\n+\log c)}
            \in 2^{\Omega(\n\log\n)} \enspace . \qedhere
  \]
\end{proof}

We remark that the proof of \thmref{4d-counting} also works to lower-bound
the number of crossing-free matchings in $G_\n$.  When one selects an edge
$uw$ to be part of the matching one has to discard the at most $c\n$
edges of $G_\n$ that intersect $uw$ as well as the $2\n-1$ edges that
have $u$ or $w$ as an endpoint.  Thus, one discards at most $(c+2)\n$
edges and the remainder of the proof goes through unmodified.

\begin{cor}\corlabel{matchings}
  For all $d\ge 4$ and $\n=2k^{d-1}$, the number of crossing-free matchings
  with vertex set $\N(k,\ldots,k,2)$ is $2^{\Omega(\n\log\n)}$.
\end{cor}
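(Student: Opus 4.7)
The plan is to mirror the proof of \thmref{4d-counting} almost verbatim, changing only the set of edges that become unavailable once an edge is committed to a matching. Let $G_\n$ be the bipartite grid graph from the proof of \thmref{4d-upper-bound} with $\ell=\n/2$, so that $V(G_\n)=\N(k,\ldots,k,2)$, and define
\[
   g(m) = \min\{\mu(G) : \text{$G$ is a subgraph of $G_\n$ having $m$ edges}\},
\]
where $\mu(G)$ counts crossing-free matchings whose edges all lie in $E(G)$. The base case is $g(m)\ge 1$ for all $m\ge 0$, witnessed by the empty matching.

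For the inductive step, I would fix such a $G$ and pick any edge $e=uw\in E(G)$. The crossing-free matchings of $G$ split into those avoiding $e$, which contribute at least $g(m-1)$, and those containing $e$. A matching of the latter type must avoid both the edges of $G$ that cross $e$ (at most $c\n-1$ of them, by \thmref{4d-upper-bound}, for the same constant $c$ as in the proof of \thmref{4d-counting}) and the edges incident to $u$ or $w$ (at most $2\n-1$ in total, which loosely bounds the vertex degrees in $G_\n$). Altogether at most $(c+2)\n$ edges of $G$ are blocked, leaving a subgraph with at least $m-(c+2)\n$ edges, so the matchings through $e$ number at least $g(m-(c+2)\n)$. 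This yields
\[
   g(m) \ge g(m-1) + g(m-(c+2)\n)
\]
whenever $m\ge (c+2)\n$. From here the derivation in \thmref{4d-counting} applies word-for-word with $c$ replaced by $c'=c+2$: repeated expansion gives $g(m)\ge(c'\n)^{t}\cdot g(m-2c't\n)$ for $m\ge 2c't\n$, and substituting $m=\n^2$ and $t=\lfloor\n/(2c')\rfloor$ delivers $g(\n^2)\in 2^{\Omega(\n\log\n)}$, which is exactly the desired lower bound on the number of crossing-free matchings on $\N(k,\ldots,k,2)$.

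There is essentially no obstacle here: the recurrence has the same shape as in \thmref{4d-counting}, only the constant changes from $c$ to $c+2$, and the resulting $2^{\Omega(\n\log\n)}$ asymptotic is unaffected. The only point that warrants explicit mention is that discarding the endpoint-neighbours of $e$, in addition to its crossers, enlarges the forbidden set per step by only an $O(\n)$ term, which the induction machinery of \thmref{4d-counting} tolerates without modification.
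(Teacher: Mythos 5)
Your proposal is correct and matches the paper's argument essentially verbatim: the paper likewise observes that when an edge $uw$ is committed to the matching one must discard the at most $c\n$ crossing edges together with the $O(\n)$ edges incident to $u$ or $w$, for a total of at most $(c+2)\n$ forbidden edges, after which the recurrence from \thmref{4d-counting} goes through with the constant $c$ replaced by $c+2$. No gaps.
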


From \corref{matchings}, we can derive a lower-bound on the number of
crossing-free spanning trees of the $k\times\cdots\times k\times 2$ grid:
\begin{cor}\corlabel{spanning-trees}
  For all $d\ge 4$ and $\n=2k^{d-1}$.  The number of crossing-free trees with
  vertex set $\N(k,\ldots,k,2)$ is $2^{\Omega(\n\log\n)}$.
\end{cor}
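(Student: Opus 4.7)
The plan is to derive \corref{spanning-trees} from \corref{matchings} by constructing an injective map from nonempty crossing-free matchings on $\N(k,\ldots,k,2)$ into crossing-free spanning trees on the same vertex set. Since \corref{matchings} already guarantees $2^{\Omega(\n\log\n)}$ such matchings, any such injection immediately yields the desired lower bound on spanning trees.

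For the construction, first fix, for each $i \in \{1,2\}$, a snake-like Hamiltonian path $P_i$ of the layer $L_i = \N(k,\ldots,k)\times\{i\}$ built from unit axis-parallel steps; such a path exists in any $(d-1)$-dimensional grid and is trivially crossing-free since any two of its edges either share an endpoint or are disjoint. For a nonempty crossing-free matching $M$ on $\N(k,\ldots,k,2)$, every edge of $M$ joins $L_1$ to $L_2$ and hence has its interior in the open slab $1 < x_d < 2$, while the interiors of the edges of $P_i$ lie in the hyperplane $x_d = i$; the three families therefore meet only at shared vertices, so $H := P_1 \cup P_2 \cup M$ is crossing-free. Since $P_1 \cup P_2$ is a forest with exactly two components and $M$ contains at least one between-layer edge, $H$ is connected with $\n - 2 + |M|$ edges and cyclomatic number $|M| - 1$. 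Define $T(M)$ to be the unique minimum spanning tree of $H$ after assigning weight $0$ to every edge of $M$ and distinct positive weights to the edges of $P_1 \cup P_2$; this $T(M)$ is a crossing-free spanning tree of $V(G_\n)$ containing all of $M$.

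To see that $M \mapsto T(M)$ is injective, observe that the set of between-layer edges of $T(M)$ is exactly $M$, since $T(M) \supseteq M$ by construction and every other edge of $H$ is confined to one layer. Hence the number of crossing-free spanning trees of $\N(k,\ldots,k,2)$ is at least the number of nonempty crossing-free matchings on the same vertex set, which by \corref{matchings} is $2^{\Omega(\n\log\n)}$. The main (and essentially only) thing requiring any care is verifying that the three families $P_1$, $P_2$, and $M$ can be overlaid without creating any new crossings, and this is immediate from the layer geometry; otherwise the argument is a direct reduction.
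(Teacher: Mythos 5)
Your proof is correct and follows essentially the same route as the paper: augment each crossing-free matching from \corref{matchings} with crossing-free within-layer edges (the paper uses all unit axis-parallel edges, you use two Hamiltonian snake paths) and prune to a spanning tree containing the matching. The only minor difference is that you recover $M$ as the set of between-layer edges of $T(M)$ to obtain an outright injection, whereas the paper tolerates a many-to-one map and divides by the at most $2^{\n-1}$ matchings contained in any single tree; both give the same $2^{\Omega(\n\log\n)}$ bound.
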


\begin{proof}
  Each of the crossing-free matchings counted by \corref{matchings} uses
  only edges $uw$ of $G_\n$ such that $u_d=1$ and $w_d=2$.  Each such
  matching, $M$, can be augmented into a crossing-free connected graph,
  $G_M$, with vertex set $\N(k,\ldots,k,2)$ by, for example, adding all
  edges in the set
  \[
     \{ uw :\mbox{$u,w\in \N(k,\ldots,k,2)$, $u_d=w_d$ and $\|u-w\|=1$} \} \enspace .  
  \] 
  The graph $G_M$ can be reduced to a tree, $T_M$, that includes all edges
  of $M$ by repeatedly finding a cycle, $C$, and removing any edge of $C$
  that is not part of $M$.  (An edge of $C\setminus M$ exists because $M$
  is a matching, and hence acyclic.)  After each such modification, $G_M$
  remains connected and has fewer cycles.  This processes terminates
  when $G_M$ becomes the desired tree, $T_M$.

  Thus, for each of the $2^{\Omega(\n\log\n)}$ matchings, $M$, there
  exists a spanning tree $T_M$ that contains $M$.  Any spanning tree
  with $\n$ vertices contains no more than $2^{\n-1}$ matchings and therefore,
  there are at least $2^{\Omega(\n\log\n)}/2^{\n-1}\in 2^{\Omega(\n\log\n)}$
  crossing-free spanning trees with vertex set $\N(k,\ldots,k,2)$.
\end{proof}

We finish this section by observing that our lower bounds are not
just for ``flat'' grids like the $k\times\cdots\times k\times 2$ grid.
They hold also for the ``square'' $k\times\cdots\times k$ grid.

\begin{cor}
  For all $d\ge 4$ and $\n=k^d$, the number of crossing-free matchings
  and spanning trees with vertex set $\N(k,\ldots,k)$ is
  $2^{\Omega(\n\log \n)}$
\end{cor}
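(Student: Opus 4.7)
The plan is to reduce to Corollaries \ref{cor:matchings} and \ref{cor:spanning-trees}, which already cover the flat $k\times\cdots\times k\times 2$ grid, by slicing the square grid $\N(k,\ldots,k)$ into $k/2$ disjoint ``double-layer'' slabs along the $d$th coordinate.

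For $i\in\{1,\ldots,k/2\}$, let $S_i = \{x\in\N(k,\ldots,k):x_d\in\{2i-1,2i\}\}$, which is a translated copy of $\N(k,\ldots,k,2)$. \corref{matchings} supplies at least $2^{\Omega(\n'\log\n')}$ crossing-free matchings on $S_i$ using only edges between its two layers, where $\n'=2k^{d-1}$. The decisive observation is that every such edge has its relative interior inside the open slab $\{x\in\R^d:2i-1<x_d<2i\}$, and these open slabs are pairwise disjoint. Consequently, choosing a matching inside each $S_i$ independently and taking their union yields a crossing-free matching on all of $\N(k,\ldots,k)$: edges from different slabs cannot meet, and edges from the same slab form a crossing-free matching by construction. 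Multiplying the independent choices,
\[
  \ncs_d(\n) \ge \bigl(2^{\Omega(\n'\log\n')}\bigr)^{k/2}
    = 2^{\Omega(k\cdot k^{d-1}\log k)} = 2^{\Omega(\n\log\n)},
\]
since $\n=k^d$ and $\log\n'=\Theta(\log k)=\Theta(\log\n)$. This settles the matching count.

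For spanning trees, I would then repeat the augmentation argument of \corref{spanning-trees} verbatim on the full grid: extend each crossing-free matching $M$ by the axis-parallel unit-length edges of $\N(k,\ldots,k)$ to produce a crossing-free connected graph $G_M$, and iteratively remove an edge of $G_M$ that lies on some cycle and is not in $M$ until only a spanning tree $T_M\supseteq M$ remains. Since every spanning tree on $\n$ vertices contains at most $2^{\n-1}$ matchings, the number of distinct $T_M$ is at least $2^{\Omega(\n\log\n)}/2^{\n-1}\in 2^{\Omega(\n\log\n)}$.

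There is no serious obstacle; the proof is essentially a product construction. The one detail worth checking is that the augmentation edges do not cross any matching edge, which is immediate from the slab decomposition: within-layer unit edges lie in a single hyperplane $\{x_d=j\}$ that a matching edge meets only at an endpoint, while between-layer unit edges lie in open slabs $\{2i<x_d<2i+1\}$ strictly disjoint from the open slabs $\{2i-1<x_d<2i\}$ that contain the matching-edge interiors.
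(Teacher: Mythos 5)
Your proof is correct and follows essentially the same route as the paper: slice the $k\times\cdots\times k$ grid into layers along the $d$th coordinate, invoke the two-layer corollaries on each slab, and use the disjointness of the open slabs to combine the choices multiplicatively before dividing by $2^{\n-1}$ for the tree count. The only cosmetic difference is that the paper counts unions of matchings over all $k-1$ consecutive layer pairs (obtaining acyclic graphs that it then augments to spanning trees) and restricts to odd-indexed pairs only for the matching bound, whereas you work with $\lfloor k/2\rfloor$ disjoint double layers throughout; both give the same $2^{\Omega(\n\log\n)}$ bound.
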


\begin{proof}
  Observe that the $k\times\cdots\times k$ grid is made up of $k$ layers,
  each of which is a $k\times\cdots\times k\times1$ grid. Between any
  consecutive pair of these layers there are, by \corref{matchings},
  $2^{\Omega(k^{d-1}\log k)}$ crossing-free matchings that contain only
  edges that span both layers.  Since there are $k-1$ consecutive pairs
  of layers, there are therefore
  \[
     \left(2^{\Omega(k^{d-1}\log k)}\right)^{k-1} = 2^{\Omega(k^d\log k)} 
        = 2^{\Omega(\n\log\n)}
  \]
  crossing-frees graphs whose vertex set is the $k\times\cdots\times k$ grid.

  Note that the graphs we obtain in the preceding manner contain no
  cycles.  Therefore, to obtain a lower-bound of $2^{\Omega(\n\log
  \n)}$ on the number of spanning trees we can augment any of these
  graphs into a crossing-free spanning tree as is done in the proof of
  \corref{spanning-trees}.

  To obtain a lower-bound on the number of matchings we can simply count
  the matchings that only include edges from layer $i$ to layer $i+1$
  with $i\equiv 1\pmod 2$.  There are
  \[
     \left(2^{\Omega(k^{d-1}\log k)}\right)^{\lfloor (k-1)/2\rfloor} 
       = 2^{\Omega(k^d\log k)}
       = 2^{\Omega(\n\log\n)}
  \]
  such matchings.
\end{proof}

\section{Summary and Remarks}
\seclabel{summary}

We have given matching upper and lower bounds on the minimum number of
crossings in $d$-D geometric grid graphs with $m$ edges and volume at
most $\n$.  The upper-bound $\crs_d(\n,m)\in O(m^2/\n)$, for $d\ge 4$,
allows the application of a recursive counting technique to show the
lower-bound $\ncs_d(\n)\in 2^{\Omega(\n\log\n)}$; this is similar to way
in which Ajtai \etal\ used the lower-bound $\crs(n,m)\in\Omega(m^3/n^2)$
to show that that the maximum number of planar graphs that can be drawn
on any point set of size $n$ is $2^{O(n)}$.  This $2^{\Omega(\n\log\n)}$
lower-bound also holds if we restrict the graphs to be spanning trees
or matchings, but we know very little about spanning cycles:

\begin{op}
  Determine the maximum number of crossing-free spanning cycles whose
  vertex set is a grid of volume $\n$.
\end{op}

In what appears to be a remarkably unfortunate coincidence, the tight
bound $\crs_3(\n,m)\in \Theta((m^2/\n)\log (m/\n))$ does not allow for the
application of a recursive counting technique to determine any non-trivial
bound on $\ncs_3(\n)$.  If the upper-bound were slightly stronger, say
\[
   \crs_3(\n,m)\in O((m^2/\n)\log^{1-\varepsilon}(m/\n)) \enspace ,
\]
then this would be sufficient to prove that
$\ncs_3(\n)\in2^{\Omega(\n\log^{\varepsilon}\n)}$.  In contrast, if the
lower-bound were slightly stronger, say
\[
  \crs_3(\n,m)\in \Omega((m^2/\n)\log^{1+\varepsilon}(m/\n)) \enspace ,
\]
then this would be sufficient to prove that
$\ncs_3(\n)\in2^{O(\n\log^{1-\varepsilon}\n)}$.

\begin{op}[Wood]\oplabel{non-trivial}
  Find non-trivial bounds---$2^{o(\n\log\n)}$ or $2^{\omega(\n)}$---on
  $\ncs_3(\n)$.
\end{op}

\opref{non-trivial} was communicated to the first author by David R.\
Wood.  His motivation for asking this question comes from a question of
Pach \etal~\cite{pach.thiele.ea:three-dimensional}, who ask ``Does every
graph with $n$ vertices and maximum degree three have a crossing-free
3-D grid drawing of volume $O(n)$?''  This question remains unresolved,
even when the maximum degree three condition is relaxed to maximum
degree $O(1)$.

If \opref{non-trivial} can be answered with a  non-trivial upper bound,
then this would settle Pach \etal's question.  The number of labelled
graphs with $n$ vertices and having maximum degree $3$ is $2^{(3/2)n\log
n - O(n)}$~\cite[Appendix~A]{barat.matousek.ea:bounded-degree}. On the
other hand, if one can show that $\ncs_3(\n)\in 2^{o(\n\log\n)}$, then
for every constant $c>0$,
\[
   n!\ncs_3(cn) = n!2^{o(n\log n)} \le 2^{n\log n + o(n\log n)}
   < 2^{(3/2)n\log n - O(n)} \enspace ,
\]
for sufficiently large $n$. This would answer Pach \etal's question
in the negative; there are more labelled $n$-vertex graphs of
maximum degree three than there are labelled 3-D geometric grid
graphs of volume $cn$ for any constant $c$.   This type of counting
argument has been used successfully to answer similar questions
about geometric thickness \cite{barat.matousek.ea:bounded-degree},
distinct distances \cite{carmi.dujmovic.ea:distinct}, slope number
\cite{pach.palvolgyi:bounded}, book thickness \cite{malitz:graphs},
and queue number \cite{malitz:graphs,wood:bounded}.

Another approach to resolving the question of Pach \etal\ is to consider
that there are maximum degree 3 graphs that have some properties that
would seem to rule out a linear volume embedding.  An obvious candidate
property is that of being an \emph{expander}: There exist graphs, $G$,
with maximum degree $O(1)$ and such that, for any subset $S\subseteq
V(G)$, $|S|\le n/2$ the number of vertices of $V(G)\setminus S$ adjacent
to at least one vertex in $S$ is at least $\epsilon|S|$, for some constant
$\epsilon > 0$.

Expanders have no separator of size $o(n)$ and are therefore
non-planar~\cite{lipton.tarjan:separator}.  However, very recently
Bourgain and Yehudayoff~\cite{bourgain.yehudayoff:monotone} have
shown that there exist bounded degree graphs that are expanders and
that have constant \emph{queue number}. Through a result of Dujmovi\'c
\etal~\cite[Theorem~8]{dujmovic.por.ea:track}, this implies that there
are constant degree expanders that can be drawn on a 3-dimensional grid
with volume $O(n)$. Thus, the property of expansion is not sufficient
to rule out linear volume 3-D grid drawings.  We are still no closer
to solving Pach \etal's 14 year old problem:

\begin{op}[Pach \etal\ 1999]
  Does every graph with $n$ vertices and maximum degree three have a
  crossing-free 3-D grid drawing of volume $O(n)$?
\end{op}

\section*{Acknowledgements}

This research was initiated at the AMS Mathematics Research Communities
Workshop on Discrete and Computational Geometry, June 10--16, 2012.
The work of Vida~Dujmovi\'c and Pat Morin was partly funded by NSERC.
Adam Sheffer was partially supported by Grant 338/09 from the Israel
Science Fund and by the Israeli Centers of Research Excellence program
(Center No.~4/11).

\bibliographystyle{plainurl}
\bibliography{3dcrossings}

\end{document}